\newcommand{\bq}{\begin{equation}}
\newcommand{\eq}{\end{equation}}
\newcommand{\bc}{\begin{center}}
\newcommand{\ec}{\end{center}}
\newcommand{\bit}{\begin{itemize}}
\newcommand{\eit}{\end{itemize}}
\newcommand{\ben}{\begin{enumerate}}
\newcommand{\een}{\end{enumerate}}
\theoremstyle{plain}
\newtheorem{theorem}{Theorem}[section]
\newtheorem*{theorem*}{Theorem}
\newtheorem{proposition}[theorem]{Proposition}
\newtheorem{remark}[theorem]{Remark}
\newtheorem{definition}[theorem]{Definition}
\newtheorem{conjecture}[theorem]{Conjecture}
\begin{document}

\journal{(internal report CC23-8)}

\begin{frontmatter}

\title{Convexity and monotonicity of the probability mass function of the Poisson distribution of order $k$}

\author[cc]{S.~R.~Mane}
\ead{srmane001@gmail.com}
\address[cc]{Convergent Computing Inc., P.~O.~Box 561, Shoreham, NY 11786, USA}

\begin{abstract}
This note focuses on the properties of two blocks of elements of the probability mass function (pmf) of the Poisson distribution of order $k\ge2$.
The first block is the elements for $n\in[1,k]$ and the second block is the elements for $n\in[k+1,2k]$.
It is proved that elements in the first block form an ``absolutely monotonic sequence''
by which is meant that all the finite differences of the sequence are positive.
Next, the properties of the elements in the second block are analyzed.
It is shown that for sufficiently small $\lambda>0$, the sequence of elements for $n\in[k+1,2k]$ is strictly decreasing and also concave.
The purpose of the analysis is to help determine a supremum value for $\lambda$,
such that the pmf of the Poisson distribution of order $k\ge2$ decreases strictly for all $n \ge k$.
A conjectured criterion for the supremum is given.
Numerical calculations indicate it is the optimal bound, i.e.~the supremum.
In addition, a simple expression is proposed, based on numerical calculation, which is sufficient (but not necessary) and is a good approximation for the supremum.
\end{abstract}

\vskip 0.25in

\begin{keyword}
Poisson distribution of order $k$
\sep probability mass function
\sep convexity
\sep monotonicity
\sep Compound Poisson distribution  
\sep discrete distribution 

\MSC[2020]{
60E05  
\sep 39B05 
\sep 11B37  
\sep 05-08  
}


\end{keyword}

\end{frontmatter}

\newpage
\setcounter{equation}{0}
\section{\label{sec:intro} Introduction}
The Poisson distribution of order $k$ is a special case of a compound Poisson distribution introduced by Adelson \cite{Adelson1966}.
The definition below is from \cite{KwonPhilippou}, with slight changes of notation.
\begin{definition}
  \label{def:pmf_Poisson_order_k}
  The Poisson distribution of order $k$ (where $k\ge1$ is an integer) and parameter $\lambda > 0$
  is an integer-valued statistical distribution with the probability mass function (pmf)
\bq
\label{eq:pmf_Poisson_order_k}
f_k(n;\lambda) = e^{-k\lambda}\sum_{n_1+2n_2+\dots+kn_k=n} \frac{\lambda^{n_1+\dots+n_k}}{n_1!\dots n_k!} \,, \qquad n=0,1,2\dots
\eq
\end{definition}
\noindent
For $k=1$ it is the standard Poisson distribution.
In a recent note \cite{Mane_Poisson_k_CC23_6}, the author presented a graphical analysis of the structure of the pmf of the Poisson distribution of order $k\ge2$.
For example, the pmf of the Poisson distribution of order $k\ge2$ can exhibit a maximum of {\em four} peaks simultaneously
(although not all are modes, i.e.~global maxima).
Also in a recent note \cite{Mane_Poisson_k_CC23_7},
the author presented analytical proofs of some of the properties of the structure of the pmf of the Poisson distribution of order $k$.

This note focuses on the properties of two blocks of elements of the probability mass function of the Poisson distribution of order $k\ge2$.
The first block is the elements for $n\in[1,k]$ and the second block is the elements for $n\in[k+1,2k]$.
The first goal of this note is to prove that elements in the first block form an ``absolutely monotonic sequence''
by which is meant that all the finite differences of the sequence are positive.
Next, we analyze the properties of the elements in the second block.
It will be shown that for sufficiently small $\lambda>0$, the sequence of elements for $n\in[k+1,2k]$ is strictly decreasing and also concave.
The purpose of the analysis is to help determine a supremum value for $\lambda$,
such that the pmf of the Poisson distribution of order $k\ge2$ decreases strictly for all $n \ge k$.
A conjectured criterion for the supremum is given in Conjecture \ref{conj:upbound_monotone_decrease} below.
Numerical calculations indicate it is the optimal bound, i.e.~the supremum.
We also propose a simple expression, based on numerical calculation, which is sufficient (but not necessary) and is a good approximation for the supremum.

The structure of this paper is as follows.
Sec.~\ref{sec:notation} presents basic definitions and notation employed in this note.
Sec.~\ref{sec:block1k} proves the absolute monotonicity of the probability mass function for $n\in[1,k]$.
Sec.~\ref{sec:block_kp1_2k} analyses the properties of the probability mass function for $n\in[k+1,2k]$.
A conjecture is proposed for the supremum value of $\lambda$, such that the pmf of the Poisson distribution of order $k$ decreases monotonically for all $n\ge k$.
Sec.~\ref{sec:numest_lam_pk1k2} proposes a simple expression, based on numerical calculation, which is sufficient (but not necessary) and is a good approximation for the supremum.
Sec.~\ref{sec:conc} concludes.

\newpage
\setcounter{equation}{0}
\section{\label{sec:notation}Basic notation and definitions}
\subsection{\label{sec:definitions}Definitions}
We work with $h_k(n;\lambda) = e^{k\lambda}f_k(n;\lambda)$ (see \cite{KwonPhilippou}) and refer to it as the ``scaled pmf'' below.
For later reference we define the parameter $\kappa=k(k+1)/2$.
From \cite{KwonPhilippou}, define $r_k$ as the value of $\lambda$ such that $h_k(k;\lambda)=1$.
Also for later reference, define $t_k$ as the value of $\lambda$ such that $h_k(k;\lambda)=2$ (see \cite{Mane_Poisson_k_CC23_7}).
Define the upper bound $\lambda_k^{\rm dec}$ such that for $\lambda\in(0,\lambda_k^{\rm dec})$,
the pmf of the Poisson distribution of order $k$ decreases strictly for all $n \ge k$.
It was shown in \cite{Mane_Poisson_k_CC23_7} that $\lambda_k^{\rm dec} > 0$.

For most of this note, we shall hold $k\ge2$ and $\lambda>0$ fixed and vary only the value of $n$.
For brevity of the exposition, we adopt the notation by Kostadinova and Minkova \cite{KostadinovaMinkova2013}
and write ``$p_n$'' in place of $h_k(n;\lambda)$ and omit explicit mention of $k$ and $\lambda$. 
Then $p_n$ is a polyomial in $\lambda$ of degree $n$ and for $n>0$ it has no constant term.
The recurrence for $p_n$ is as follows
(eq.~(6) in \cite{Adelson1966},
eq.~(6) in \cite{KwonPhilippou},
eq.~(8) and Remark 3 in \cite{KostadinovaMinkova2013},
eq.~(2.3) in \cite{GeorghiouPhilippouSaghafi}, 
and in all cases terms with negative indices are set to zero).
\bq
\label{eq:KM_rechk}
p_n = \frac{\lambda}{n} \,\sum_{j=1}^k jp_{n-j} \,.
\eq
Kostadinova and Minkova published the following expression for the pmf, in terms of combinatorial sums
(Theorem 1 in \cite{KostadinovaMinkova2013}, omitting the prefactor of $e^{-k\lambda}$).
\bq
\label{eq:KM_Thm1}
\begin{split}
  p_0 &= 1 \,,
  \\
  p_n &= \sum_{j=1}^n \binom{n-1}{j-1}\,\frac{\lambda^j}{j!} \qquad (n=1,2,\dots,k) \,,
  \\
  p_n &= \sum_{j=1}^n \binom{n-1}{j-1}\,\frac{\lambda^j}{j!}
  \;\;-\;\; \sum_{i=1}^\ell (-1)^{i-1}\,\frac{\lambda^i}{i!} \sum_{j=0}^{n-i(k+1)} \binom{n - i(k+1) +i-1}{j+i-1}\,\frac{\lambda^j}{j!}
  \\
  &\qquad\qquad (n = \ell(k+1)+m,\, m=0,1,\dots,k,\, \ell=1,2,\dots,\infty) \,.
\end{split}
\eq
Note the following.
\begin{enumerate}
\item
  The first element is just the single number $p_0=1$.
\item
  The block $n\in[1,k]$ has $k$ elements and is special in that there are no subtractions.
\item
  The tail piece for $n>k$ is composed of blocks of $k+1$ elements each, for $\ell=1,2,\dots$.
\item
  Numerical computations confirm eqs.~\eqref{eq:KM_rechk} and \eqref{eq:KM_Thm1} yield equal values for $p_n$.
\end{enumerate}

\newpage
\subsection{\label{sec:validation}Validation check of combinatorial sums for the probability mass function}
Let us validate Kostadinova and Minkova's expression for the scaled pmf in eq.~\eqref{eq:KM_Thm1} for $k=2$ and a few values of $n$.
For $k=2$ the expression for $p_n$ is simple, for all $n\ge1$.
\bq
\label{eq:pmf_k2}
\begin{split}
p_n &= \sum_{j=0}^{\lfloor(n/2)\rfloor} \frac{\lambda^{n-j}}{(n-2j)!j!}
\\
&= \frac{\lambda^n}{n!} +\frac{\lambda^{n-1}}{(n-2)!1!} +\frac{\lambda^{n-2}}{(n-4)!2!} +\dots +\frac{\lambda^{n-\lfloor(n/2)\rfloor}}{\lfloor(n/2)\rfloor!} \,.
\end{split}
\eq
The first few polynomials are as follows.
\begin{subequations}
\label{eq:poly_k2}
\begin{align}
  p_0 &= 1 \,,
  \\
  p_1 &= \lambda \,,
  \\
  p_2 &= \frac{\lambda^2}{2!} + \frac{\lambda}{0!1!} \,,
  \\
  p_3 &= \frac{\lambda^3}{3!} + \frac{\lambda^2}{1!1!} \,,
  \\
  p_4 &= \frac{\lambda^4}{4!} + \frac{\lambda^3}{2!1!} + \frac{\lambda^2}{0!2!} \,,
  \\
  p_5 &= \frac{\lambda^5}{5!} + \frac{\lambda^4}{3!1!} + \frac{\lambda^3}{1!2!} \,,
  \\
  p_6 &= \frac{\lambda^6}{6!} + \frac{\lambda^5}{4!1!} + \frac{\lambda^4}{2!2!} + \frac{\lambda^3}{0!3!} \,,
  \\
  p_7 &= \frac{\lambda^7}{7!} + \frac{\lambda^6}{5!1!} + \frac{\lambda^5}{3!2!} + \frac{\lambda^4}{1!3!} \,,
  \\
  p_8 &= \frac{\lambda^8}{8!} + \frac{\lambda^7}{6!1!} + \frac{\lambda^6}{4!2!} + \frac{\lambda^5}{2!3!} + \frac{\lambda^4}{0!4!} \,.
\end{align}
\end{subequations}
Let us employ eq.~\eqref{eq:KM_Thm1} and compare with the expressions in eq.~\eqref{eq:poly_k2}.
\begin{enumerate}
\item
  Case $n=1$:
\bq
\begin{split}
  p_1 &= \sum_{j=1}^1 \binom{1-1}{j-1}\,\frac{\lambda^j}{j!}
  = \lambda \,.
\end{split}
\eq
\item
  Case $n=2$:
\bq
\begin{split}
  p_2 &= \sum_{j=1}^2 \binom{2-1}{j-1}\,\frac{\lambda^j}{j!}
  = \binom{1}{0}\lambda +\binom{1}{1}\frac{\lambda^2}{2!}
  = \frac{\lambda}{0!1!} +\frac{\lambda^2}{2!0!} \,.
\end{split}
\eq
\item
  Case $n=3$:
\bq
\begin{split}
  p_3 &= \sum_{j=1}^3 \binom{2}{j-1}\,\frac{\lambda^j}{j!}
  \;-\; \sum_{i=1}^1 (-1)^{i-1}\,\frac{\lambda^i}{i!} \sum_{j=0}^{3-3i} \binom{3 - 3i +i-1}{j+i-1}\,\frac{\lambda^j}{j!}
  \\
  &= \binom{2}{0}\lambda +\binom{2}{1}\frac{\lambda^2}{2!} +\binom{2}{2}\frac{\lambda^3}{3!}  - \lambda
  \\
  &= \frac{\lambda^2}{1!1!} +\frac{\lambda^3}{3!0!} \,.
\end{split}
\eq

\newpage
\item
  Case $n=4$:
\bq
\begin{split}
  p_4 &= \sum_{j=1}^4 \binom{3}{j-1}\,\frac{\lambda^j}{j!}
  \;-\; \sum_{i=1}^1 (-1)^{i-1}\,\frac{\lambda^i}{i!} \sum_{j=0}^{4-3i} \binom{4 - 3i +i-1}{j+i-1}\,\frac{\lambda^j}{j!}
  \\
  &= \binom{3}{0}\lambda +\binom{3}{1}\frac{\lambda^2}{2!} +\binom{3}{2}\frac{\lambda^3}{3!} +\binom{3}{3}\frac{\lambda^4}{4!}
  -\lambda \sum_{j=0}^{1} \binom{1}{j}\,\frac{\lambda^j}{j!}
  \\
  &= \lambda + \frac{3\lambda^2}{0!2!} +\frac{\lambda^3}{2!1!} +\frac{\lambda^4}{4!0!} -\lambda (1 + \lambda)
  \\
  &= \frac{\lambda^2}{0!2!} +\frac{\lambda^3}{2!1!} +\frac{\lambda^4}{4!0!} \,.
\end{split}
\eq

\item
  Case $n=5$:
\bq
\begin{split}
  p_5 &= \sum_{j=1}^5 \binom{4}{j-1}\,\frac{\lambda^j}{j!}
  \;-\; \sum_{i=1}^1 (-1)^{i-1}\,\frac{\lambda^i}{i!} \sum_{j=0}^{5-3i} \binom{5 - 3i +i-1}{j+i-1}\,\frac{\lambda^j}{j!}
  \\
  &= \binom{4}{0}\lambda +\binom{4}{1}\frac{\lambda^2}{2!} +\binom{4}{2}\frac{\lambda^3}{3!} +\binom{4}{3}\frac{\lambda^4}{4!} +\binom{4}{4}\frac{\lambda^5}{5!}
  -\lambda \sum_{j=0}^{2} \binom{2}{j}\,\frac{\lambda^j}{j!}
  \\
  &= \lambda + \frac{4\lambda^2}{2!} +\frac{6\lambda^3}{3!} +\frac{4\lambda^4}{4!} +\frac{\lambda^5}{5!}
  -\lambda \Bigl(1 + 2\lambda +\frac{\lambda^2}{2!}\Bigr)
  \\
  &= \frac{\lambda^3}{1!2!} +\frac{\lambda^4}{3!1!} +\frac{\lambda^5}{5!0!} \,.
\end{split}
\eq

\item
  Case $n=6$:
\bq
\begin{split}
  p_6 &= \sum_{j=1}^6 \binom{5}{j-1}\,\frac{\lambda^j}{j!}
  \;-\; \sum_{i=1}^2 (-1)^{i-1}\,\frac{\lambda^i}{i!} \sum_{j=0}^{6-3i} \binom{6 - 3i +i-1}{j+i-1}\,\frac{\lambda^j}{j!}
  \\
  &= \binom{5}{0}\lambda +\binom{5}{1}\frac{\lambda^2}{2!} +\binom{5}{2}\frac{\lambda^3}{3!} +\binom{5}{3}\frac{\lambda^4}{4!}
  +\binom{5}{4}\frac{\lambda^5}{5!} +\binom{5}{5}\frac{\lambda^6}{6!}
  \\
  &\quad
  -\lambda \sum_{j=0}^{3} \binom{3}{j}\,\frac{\lambda^j}{j!}
  +\frac{\lambda^2}{2!} \sum_{j=0}^{0} \binom{1}{j+1}\,\frac{\lambda^j}{j!}
  \\
  &= \lambda + \frac{5\lambda^2}{2!} +\frac{10\lambda^3}{3!} +\frac{10\lambda^4}{4!} +\frac{5\lambda^5}{5!} +\frac{\lambda^6}{6!}
  \\
  &\quad
  -\lambda \Bigl(1 + 3\lambda +\frac{3\lambda^2}{2!} +\frac{\lambda^3}{3!}\Bigr)
  +\frac{\lambda^2}{2!} 
  \\
  &= \frac{\lambda^3}{0!3!} +\frac{\lambda^4}{2!2!} +\frac{\lambda^5}{4!1!} +\frac{\lambda^6}{6!0!} \,.
\end{split}
\eq

\newpage
\item
  Case $n=7$:
\bq
\begin{split}
  p_7 &= \sum_{j=1}^7 \binom{6}{j-1}\,\frac{\lambda^j}{j!}
  \;-\; \sum_{i=1}^2 (-1)^{i-1}\,\frac{\lambda^i}{i!} \sum_{j=0}^{7-3i} \binom{7 - 3i +i-1}{j+i-1}\,\frac{\lambda^j}{j!}
  \\
  &= \binom{6}{0}\lambda +\binom{6}{1}\frac{\lambda^2}{2!} +\binom{6}{2}\frac{\lambda^3}{3!} +\binom{6}{3}\frac{\lambda^4}{4!}
  +\binom{6}{4}\frac{\lambda^5}{5!} +\binom{6}{5}\frac{\lambda^6}{6!} +\binom{6}{6}\frac{\lambda^7}{7!}
  \\
  &\quad
  -\lambda \sum_{j=0}^{4} \binom{4}{j}\,\frac{\lambda^j}{j!}
  +\frac{\lambda^2}{2!} \sum_{j=0}^{1} \binom{2}{j+1}\,\frac{\lambda^j}{j!}
  \\
  &= \lambda + \frac{6\lambda^2}{2!} +\frac{15\lambda^3}{3!} +\frac{20\lambda^4}{4!} +\frac{15\lambda^5}{5!} +\frac{6\lambda^6}{6!} +\frac{\lambda^7}{7!}
  \\
  &\quad
  -\lambda \Bigl(1 + 4\lambda +\frac{6\lambda^2}{2!} +\frac{4\lambda^3}{3!} +\frac{\lambda^4}{4!}\Bigr)
  +\frac{\lambda^2}{2!} (2 +\lambda)
  \\
  &= \frac{\lambda^4}{1!3!} +\frac{\lambda^5}{3!2!} +\frac{\lambda^6}{5!1!} +\frac{\lambda^7}{7!0!} \,.
\end{split}
\eq

\item
  Case $n=8$:
\bq
\begin{split}
  p_8 &= \sum_{j=1}^8 \binom{7}{j-1}\,\frac{\lambda^j}{j!}
  \;-\; \sum_{i=1}^2 (-1)^{i-1}\,\frac{\lambda^i}{i!} \sum_{j=0}^{8-3i} \binom{8 - 3i +i-1}{j+i-1}\,\frac{\lambda^j}{j!}
  \\
  &= \binom{7}{0}\lambda +\binom{7}{1}\frac{\lambda^2}{2!} +\binom{7}{2}\frac{\lambda^3}{3!} +\binom{7}{3}\frac{\lambda^4}{4!}
  +\binom{7}{4}\frac{\lambda^5}{5!} +\binom{7}{5}\frac{\lambda^6}{6!} +\binom{7}{6}\frac{\lambda^7}{7!} +\binom{7}{7}\frac{\lambda^8}{8!}
  \\
  &\quad
  -\lambda \sum_{j=0}^{5} \binom{5}{j}\,\frac{\lambda^j}{j!}
  +\frac{\lambda^2}{2!} \sum_{j=0}^{2} \binom{3}{j+1}\,\frac{\lambda^j}{j!}
  \\
  &= \lambda + \frac{7\lambda^2}{2!} +\frac{21\lambda^3}{3!} +\frac{35\lambda^4}{4!} +\frac{35\lambda^5}{5!} +\frac{21\lambda^6}{6!} +\frac{7\lambda^7}{7!} +\frac{\lambda^8}{8!}
  \\
  &\quad
  -\lambda \Bigl(1 + 5\lambda +\frac{10\lambda^2}{2!} +\frac{10\lambda^3}{3!} +\frac{5\lambda^4}{4!} +\frac{\lambda^5}{5!}\Bigr)
  +\frac{\lambda^2}{2!} \Bigl(3 +3\lambda +\frac{\lambda^2}{2!}\Bigr)
  \\
  &= \frac{\lambda^4}{0!4!} +\frac{\lambda^5}{2!3!} +\frac{\lambda^6}{4!2!} +\frac{\lambda^7}{6!1!} +\frac{\lambda^8}{8!0!} \,.
\end{split}
\eq
\end{enumerate}

\newpage
\setcounter{equation}{0}
\section{\label{sec:block1k}Absolute monotonicity of the probability mass function for $n\in[1,k]$}
It was proved in Lemma 1 in \cite{KwonPhilippou} that for fixed $k\ge2$ and all $\lambda>0$,
the sequence $\{p_1,\dots,p_k\}$ is strictly increasing.
The following expression is from eq.~(3.3) in \cite{Mane_Poisson_k_CC23_7}.
\bq
\label{eq:KP_incseq_pn}  
\lambda = p_1 < p_2 < \dots < p_k \,.
\eq
That is to say, $p_n - p_{n-1} > 0$ for all $n\in[2,k]$.
We shall show that, in addition, {\em all} the finite differences are strictly positive for fixed $k\ge2$ and $\lambda>0$
and a suitable subinterval of $n\in[1,k]$ (see below).
The finite differences are analogs of derivatives, but for discrete (integer) values of $n$.
They are centered finite differences, for $m=1,2,\dots$ (with the formal definition $\Delta_0(n) = p_n$).
\bq
\label{eq:Delta_m_def}
\Delta_m(n) = \sum_{j=0}^m \binom{m}{j} (-1)^{j-1} p_{n-j} \,.
\eq
The first few examples are as follows.
\begin{subequations}
\label{eq:fd_1_2}
\begin{align}
  \Delta_1(n) &= p_n - p_{n-1} \,,
  \\
  \Delta_2(n) &= p_n - 2p_{n-1} + p_{n-2} \,.
\end{align}
\end{subequations}
In this section, we require only the expression for $p_n$ for $n\in[1,k]$ in eq.~\eqref{eq:KM_Thm1}.
Let us examine a few simple cases, to clarify ideas.
First recall the Pascal triangle recurrence for the binomial coefficients.
\bq
\label{eq:binom_recurrence}
\binom{n}{j} = \binom{n-1}{j} + \binom{n-1}{j-1} \,.
\eq  
For the first finite difference $\Delta_1(n)$ we obtain
\bq
\begin{split}
  p_n -p_{n-1} &= \biggl[\,\sum_{j=1}^n \binom{n-1}{j-1}\,\frac{\lambda^j}{j!}\,\biggr] - \biggl[\,\sum_{j=1}^{n-1} \binom{n-2}{j-1}\,\frac{\lambda^j}{j!}\,\biggr]
  \\
  &= \frac{\lambda^n}{n!} + \sum_{j=1}^{n-1} \biggl[\,\binom{n-1}{j-1} - \binom{n-2}{j-1}\,\biggr]\,\frac{\lambda^j}{j!}
  \\
  &= \frac{\lambda^n}{n!} + \sum_{j=2}^{n-1} \binom{n-2}{j-2}\,\frac{\lambda^j}{j!}
  \\
  &= \sum_{j=2}^n \binom{n-2}{j-2}\,\frac{\lambda^j}{j!} \,.
\end{split}
\eq
Hence $\Delta_1(n) > 0$.
The lower limit of the sum was increased from $j=1$ to $j=2$ because the binomial coefficients both equal $1$ for $j=1$ and cancel to zero.
We require $n\in[2,k]$ to justify the derivation.
For the second finite difference $\Delta_2(n)$ we obtain
\bq
\begin{split}
  p_n -2p_{n-1} +p_{n-2} &= (p_n -p_{n-1}) - (p_{n-1}-p_{n-2})
  \\
  &= \biggl[\,\sum_{j=2}^n \binom{n-2}{j-2}\,\frac{\lambda^j}{j!}\,\biggr] - \biggl[\,\sum_{j=2}^{n-1} \binom{n-3}{j-2}\,\frac{\lambda^j}{j!}\,\biggr]
  \\
  &= \frac{\lambda^n}{n!} + \sum_{j=2}^{n-1} \biggl[\,\binom{n-2}{j-2} - \binom{n-3}{j-2}\,\biggr]\,\frac{\lambda^j}{j!}
  \\
  &= \frac{\lambda^n}{n!} + \sum_{j=3}^{n-1} \binom{n-3}{j-3}\,\frac{\lambda^j}{j!}
  \\
  &= \sum_{j=3}^n \binom{n-3}{j-3}\,\frac{\lambda^j}{j!} \,.
\end{split}
\eq
Hence $\Delta_2(n) > 0$.
The lower limit of the sum was increased from $j=2$ to $j=3$ because the binomial coefficients both equal $1$ for $j=2$ and cancel to zero.
We require $n\in[3,k]$ to justify the derivation.
For the $m^{th}$ finite difference one can discern the pattern and surmise the answer.
\begin{proposition}
\label{prop:monotone_m}
For fixed $k\ge2$ and $\lambda>0$, the $m^{th}$ finite difference $\Delta_m(n)$ (see eq.~\eqref{eq:Delta_m_def})
is given as follows and is strictly positive for all $m\in[1,k-1]$ and all $n\in[m+1,k]$.
\bq
\label{eq:monotone_m}
\Delta_m(n) = \sum_{j=m+1}^n \binom{n-m-1}{j-m-1}\,\frac{\lambda^j}{j!} \;\; >\;\; 0\,.
\eq
\end{proposition}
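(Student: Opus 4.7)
The plan is to proceed by induction on $m$, using the elementary recursion $\Delta_{m+1}(n)=\Delta_m(n)-\Delta_m(n-1)$ together with the Pascal identity in eq.~\eqref{eq:binom_recurrence}. The base case $m=1$ is already carried out in the display preceding the proposition, and the $m=2$ case makes the pattern transparent: one subtracts the sum at $n-1$ from the sum at $n$, applies Pascal termwise, observes that the lowest surviving index collapses to zero because both binomials equal $1$, and absorbs the leftover $\lambda^n/n!$ back into the sum, producing exactly the next level of the formula.

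For the inductive step, assume the claimed identity for $\Delta_m(n)$ on $n\in[m+1,k]$, and form $\Delta_m(n)-\Delta_m(n-1)$ for $n\in[m+2,k]$. Peel off the $j=n$ term from the first sum, combine the remaining sums over $j\in[m+1,n-1]$ using
\[
\binom{n-m-1}{j-m-1}-\binom{n-m-2}{j-m-1}=\binom{n-m-2}{j-m-2},
\]
note that the $j=m+1$ term now vanishes (both binomials equal $1$), and reattach the peeled $j=n$ term, which coincides with $\binom{n-m-2}{n-m-2}\lambda^n/n!$. The result is
\[
\Delta_{m+1}(n)=\sum_{j=m+2}^{n}\binom{n-m-2}{j-m-2}\,\frac{\lambda^j}{j!},
\]
which is the claimed formula with $m$ replaced by $m+1$. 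Strict positivity is immediate because every summand is a positive multiple of a positive power of $\lambda$.

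I do not anticipate any genuine obstacle: the argument is algebraic bookkeeping of the sort already exhibited for $m=1,2$. The only point requiring attention is the range of validity. The induction uses the unsubtracted block formula of eq.~\eqref{eq:KM_Thm1} for \emph{both} $p_n$ and $p_{n-1}$, which forces $n-1\le k$ and $n\ge m+2$, giving $n\in[m+2,k]$ for the step from $m$ to $m+1$. This in turn requires $m+2\le k$, so the induction runs up to $m=k-1$, matching precisely the range $m\in[1,k-1]$, $n\in[m+1,k]$ stated in the proposition.
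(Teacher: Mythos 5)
Your proposal is correct and follows essentially the same route as the paper: induction on the order of the finite difference, using $\Delta_{m+1}(n)=\Delta_m(n)-\Delta_m(n-1)$ with the unsubtracted block formula of eq.~\eqref{eq:KM_Thm1}, Pascal's identity termwise, cancellation of the lowest-index term, and reabsorption of the top term $\lambda^n/n!$, with the same bookkeeping of the ranges $m\in[1,k-1]$, $n\in[m+1,k]$. No gaps; the only differences from the paper's proof are cosmetic (you index the step as $m\to m+1$ rather than $m-1\to m$).
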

\begin{proof}
We employ an induction argument on $m$.
Assume eq.~\eqref{eq:monotone_m} to be true for $m-1$ and $n\in[m,k]$ (where $k > m$ and $\lambda>0$ are fixed).
Then for $n\in[m+1,k]$ (which is a nonempty interval because $k>m$)
\bq
\begin{split}
  \sum_{j=0}^m \binom{m}{j} (-1)^{j-1} p_{n-j} &=
  \biggl[\,\sum_{j=m}^n \binom{n-m}{j-m}\,\frac{\lambda^j}{j!}\,\biggr]
  - \biggl[\,\sum_{j=m}^{n-1} \binom{n-m-1}{j-m}\,\frac{\lambda^j}{j!}\,\biggr]
  \\
  &= \frac{\lambda^n}{n!} + \sum_{j=m}^{n-1} \biggl[\,\binom{n-m}{j-m} - \binom{n-m-1}{j-m}\,\biggr]\,\frac{\lambda^j}{j!}
  \\
  &= \frac{\lambda^n}{n!} + \sum_{j=m+1}^{n-1} \binom{n-m-1}{j-m-1}\,\frac{\lambda^j}{j!}
  \\
  &= \sum_{j=m+1}^n \binom{n-m-1}{j-m-1}\,\frac{\lambda^j}{j!} \,.
\end{split}
\eq
We know the result to be true for $m=1$, hence the proof follows by induction on $m$.
\end{proof}
\newpage
\noindent
Feller \cite{Feller} defined a ``completely monotonic function'' $f(x)$ as one with the following property.
\begin{definition}
  A function $f:(0,\infty)\to[0,\infty)$ is completely monotonic if 
\bq
(-1)^nf^{(n)}(x) \ge 0 
\eq  
for all $n=0,1,2\dots$.
\end{definition}
\noindent
Note the following:
\begin{enumerate}
\item
  Feller's work was in connection with probability distributions.
\item
  I suspect Feller had in mind the negative exponential $e^{-x}$ is completely monotonic for $x\in(0,\infty)$.
\item
  A function such as $e^x$ has all positive derivatives but is technically not completely monotonic.
\end{enumerate}
Nevertheless, the underlying idea for us is that all the derivatives are positive (or zero).
We also replace derivatives with finite differences.
\begin{remark}
\label{rem:absolute_monotone}  
  We can perhaps say that the sequence $\{p_1,\dots,p_k\}$ is ``absolutely monotonic'' for all finite differences $m=1,\dots,k-1$,
  for fixed $k\ge2$ and $\lambda>0$.
\end{remark}
\noindent
Note that the term ``absolutely monotonic'' is {\em not} standard mathematical terminology.

\newpage
\setcounter{equation}{0}
\section{\label{sec:block_kp1_2k}Properties of the probability mass function for $n\in[k+1,2k]$}
Based on numerical calculations, it was noted in \cite{Mane_Poisson_k_CC23_6} that for fixed $k\ge2$ and sufficiently small $\lambda>0$,
the pmf of the Poisson distribution of order $k$ decreases monotonically for all $n \ge k$.
An analytical proof of this observation was published in \cite{Mane_Poisson_k_CC23_7}.
The proof in \cite{Mane_Poisson_k_CC23_7} employed induction on $n$.
\begin{proposition}  
\label{prop:inductionproof}(Restatement of Prop.~(4.1) in \cite{Mane_Poisson_k_CC23_7})
For fixed $k\ge2$, and $n \ge 2k$, suppose that there exists a fixed $\lambda>0$ such that the block of $k+1$ contiguous elements
$\{p_{n-k},p_{n-k+1},\dots,p_n\}$ form a strictly decreasing sequence $p_{n-k} > p_{n-k+1} > \dots > p_n$.
Then $p_{n+1}-p_n < 0$, i.e.~the sequence can be extended to include $p_n > p_{n+1}$.
\end{proposition}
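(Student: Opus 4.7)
The plan is to use the recurrence eq.~\eqref{eq:KM_rechk} to relate $(n{+}1)p_{n+1}$ directly to $np_n$, and then exploit the hypothesised ordering of the block $\{p_{n-k},\dots,p_n\}$ to bound the difference. This should be cleaner than manipulating the closed-form sums in eq.~\eqref{eq:KM_Thm1}, because the recurrence already encodes exactly $k+1$ contiguous elements.

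First I would write
\begin{equation*}
(n+1)p_{n+1} = \lambda\sum_{j=1}^{k} j\, p_{n+1-j} = \lambda\bigl[p_n + 2p_{n-1} + 3p_{n-2} + \dots + k\, p_{n-k+1}\bigr],
\end{equation*}
\begin{equation*}
n\, p_n = \lambda\sum_{j=1}^{k} j\, p_{n-j} = \lambda\bigl[p_{n-1} + 2p_{n-2} + \dots + k\, p_{n-k}\bigr].
\end{equation*}
Subtracting and collecting coefficients (the coefficient of $p_{n-i}$ for $0\le i\le k-1$ collapses from $i{+}1$ minus $i$ to $1$, while $p_{n-k}$ appears only in the second expression), I expect to obtain
\begin{equation*}
(n+1)p_{n+1} - n\,p_n \;=\; \lambda\Bigl[\,\sum_{i=0}^{k-1} p_{n-i} \;-\; k\,p_{n-k}\,\Bigr].
\end{equation*}

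Now I would invoke the hypothesis: since $p_{n-k} > p_{n-k+1} > \dots > p_n$, each of the $k$ terms in the sum $\sum_{i=0}^{k-1} p_{n-i}$ is strictly less than $p_{n-k}$, so the bracket is strictly negative. Since $\lambda > 0$, this gives $(n+1)p_{n+1} < n\,p_n$, and because $p_n > 0$ and $n/(n+1) < 1$, it follows that $p_{n+1} < \tfrac{n}{n+1}\,p_n < p_n$, which is the desired conclusion.

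Conceptually there is no hard step; the proof reduces to a one-line algebraic identity plus the monotonicity hypothesis. The only place to be careful is the reindexing in the subtraction, making sure the coefficient of $p_{n-k}$ comes out exactly $-k$ and that the remaining terms collapse to the simple sum $\sum_{i=0}^{k-1} p_{n-i}$; once this is in hand the inequality is immediate. The restriction $n\ge 2k$ in the statement is not actually needed for this algebraic step itself, but it ensures all indices $n-k,\dots,n$ appearing in the hypothesis lie in the range where the recurrence and the hypothesised ordering are meaningful within the inductive scheme of \cite{Mane_Poisson_k_CC23_7}.
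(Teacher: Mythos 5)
Your argument is correct. From the recurrence \eqref{eq:KM_rechk}, $(n+1)p_{n+1}=\lambda\sum_{j=1}^k j\,p_{n+1-j}$ and $np_n=\lambda\sum_{j=1}^k j\,p_{n-j}$, and the subtraction does telescope exactly as you claim, giving $(n+1)p_{n+1}-np_n=\lambda\bigl[\sum_{i=0}^{k-1}p_{n-i}-k\,p_{n-k}\bigr]$; since each of $p_n,\dots,p_{n-k+1}$ is strictly smaller than $p_{n-k}$ by hypothesis, the bracket is strictly negative, and since $p_n>0$ for all $n\ge0$ and $\lambda>0$ (e.g.\ the term $n_1=n$ in \eqref{eq:pmf_Poisson_order_k} contributes $\lambda^n/n!$), you even get the slightly stronger conclusion $p_{n+1}<\tfrac{n}{n+1}\,p_n<p_n$. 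Note that this paper does not reprove the proposition -- it is restated from Prop.~4.1 of \cite{Mane_Poisson_k_CC23_7} -- so a line-by-line comparison is not possible here; but your recurrence-based manipulation is of the same character as the paper's own use of \eqref{eq:KM_rechk} in the overestimate \eqref{eq:gross_upbound_monotone}. A marginally shorter variant of the same idea avoids the telescoping altogether: the hypothesis gives $p_{n+1-j}<p_{n-j}$ termwise for $j=1,\dots,k$, hence $p_{n+1}=\tfrac{\lambda}{n+1}\sum_{j=1}^k j\,p_{n+1-j}<\tfrac{\lambda}{n+1}\sum_{j=1}^k j\,p_{n-j}=\tfrac{n}{n+1}\,p_n$, which is the same bound. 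Your closing remark is also right that the algebra only needs $n\ge k$ (so that no indices are negative); $n\ge2k$ matters only for the inductive scheme in which the proposition is deployed.
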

\noindent
Note the following.
\begin{enumerate}
\item
  It was shown in \cite{Mane_Poisson_k_CC23_7} that there exists a value $\lambda>0$ such that the block of elements
  $\{p_k,\dots,p_{2k}\}$ is a strictly decreasing sequence of $k+1$ elements.
\item
It was also shown in \cite{KwonPhilippou} that $p_k > p_{k+1}$ for $0 < \lambda \le r_k$.
Recall $r_k$ is the value of $\lambda$ such that $p_k=1$. 
\item
  An improved upper bound on $\lambda$ (such that $p_k > p_{k+1}$) was derived in \cite{Mane_Poisson_k_CC23_7}.
  Recall $t_k$ is the value of $\lambda$ such that $p_k=2$. Then $p_k > p_{k+1}$ for $0 < \lambda \le t_k$.
  Note that $t_k$ is a sufficient but not necessary upper bound on the value of $\lambda$.
\end{enumerate}
Hence the conditions for $p_k > p_{k+1}$ have been analyzed in detail.
Our focus here is on the block of $k$ elements $\{p_{k+1},\dots,p_{2k}\}$.
As a simple starting exercise, consider $k=3$ (for $k\le2$ there are too few points).
For $k=3$, the polynomials in the block $n\in[k+1,2k]$ are
\begin{subequations}
\begin{align}
  p_4 = h_3(4;\lambda) &= \frac{\lambda^4}{4!} +\frac{\lambda^3}{2!} +\frac{3\lambda^2}{2} \,,
  \\
  p_5 = h_3(5;\lambda) &= \frac{\lambda^5}{5!} +\frac{\lambda^4}{3!} +\lambda^3 +\lambda^2 \,,
  \\
  p_6 = h_3(6;\lambda) &= \frac{\lambda^6}{6!} +\frac{\lambda^5}{4!} +\frac{5\lambda^4}{12} +\frac{7\lambda^3}{6} +\frac{\lambda^2}{2} \,.
\end{align}
\end{subequations}
The difference $p_5-p_4$ is
\bq
\begin{split}
  p_5-p_4 &= \frac{\lambda^5}{120} +\frac{\lambda^4}{8} +\frac{\lambda^3}{2} -\frac{\lambda^2}{2} 
  \\
  &= \frac{\lambda^2}{2}\biggl(\frac{\lambda^3}{60} +\frac{\lambda^2}{4} +\lambda -1\biggr) \,.
\end{split}
\eq
This is negative for $\lambda\in(0,\frac12)$.
The positive real root for $p_5-p_4=0$ is $\lambda\simeq 0.82187688$.
Next, the difference $p_6-p_5$ is
\bq  
\begin{split}
  p_6-p_5 &= \frac{\lambda^6}{6!} +\frac{\lambda^5}{30} +\frac{\lambda^4}{4} +\frac{\lambda^3}{6} -\frac{\lambda^2}{2}
  \\
  &= \frac{\lambda^2}{2}\biggl(\frac{\lambda^4}{360} +\frac{\lambda^3}{15} +\frac{\lambda^2}{2} +\frac{\lambda}{3} -1\biggr) \,.
\end{split}
\eq
This is negative for $\lambda\in(0,1)$.
The positive real root for $p_6-p_5=0$ is $\lambda \simeq 1.061200075$.  
In general, $p_{n+1} < p_n$ for all $n\in[k,2k-1]$ if the first pair is decreasing: $p_{k+2} < p_{k+1}$.
We shall prove this below, for all $k\ge2$.
Next consider the convexity (second finite difference $\Delta_2(n)$), for $k=3$ and $n=6$:
(We require $k\ge3$ because for $k=2$ there are too few points in the interval $n\in[k+1,2k]$.)
\bq  
\begin{split}
  p_6+p_4 -2p_5 &= \frac{\lambda^6}{720} +\frac{\lambda^5}{40} +\frac{7\lambda^4}{24} -\frac{\lambda^3}{3} 
  \\
  &= \lambda^3\biggl(\frac{\lambda^3}{720} +\frac{\lambda^2}{40} +\frac{\lambda}{8} -\frac13\biggr) \,.
\end{split}
\eq
This is negative for small $\lambda>0$ (concave) but eventually positive (convex).
The positive real root is $\lambda \simeq 1.88318444$.  
Hence, unlike the sequence $\{p_1,\dots,p_k\}$, the sequence $\{p_{k+1},\dots,p_{2k}\}$ is not always strictly increasing or decreasing, nor always convex or concave.
Fig.~\ref{fig:graph_hist_k10_nonmonotonic} displays a plot of the scaled pmf $h_k(n;\lambda)$
of the Poisson distribution of order $10$ and $\lambda=0.3$ (circles) and $\lambda=0.2$ (triangles).
\begin{enumerate}
\item
For both $\lambda=0.2$ and $0.3$, the scaled pmf increases strictly for $n\in[1,k]$ (see Sec.~\ref{sec:block1k}).
\item
For $\lambda=0.2$, the scaled pmf decreases strictly for $n\in[k+1,2k]$ but for $\lambda=0.3$ it exhibits a local maximum (which is {\em not} a mode).
\item
Also for both $\lambda=0.2$ and $0.3$, the scaled pmf is concave for $n\in[k+1,2k]$.
\item
  For both $\lambda=0.2$ and $0.3$, notice the large drop in value from $p_k$ to $p_{k+1}$.
\item
  There is also a noticeable drop in value from $p_{2k}$ to $p_{2k+1}$, although (much?) smaller in magnitude.
\item
  The smallest power of $\lambda$ in $p_n$ for $n\in[(i-1)k+1,ik]$ is $\lambda^i$, for $i=1,2,\dots$.
  Hence the smallest power of $\lambda$ changes from $\lambda$ in $p_k$ to $\lambda^2$ in $p_{k+1}$ and from $\lambda^2$ in $p_{2k}$ to $\lambda^3$ in $p_{2k+1}$.
  There are probably similar drops in value between $p_{ik}$ and $p_{ik+1}$ for all $i=1,2,\dots$.
  However, this cannot be a general rule because for sufficiently large values of $\lambda$, the pmf does not decrease monotonically for $n\ge k$.
\end{enumerate}

\newpage
In the rest of this section, we quantify the behavior of the sequence $\{p_{k+1},\dots,p_{2k}\}$ in more detail.
We employ the expression for $p_n$ by Kostadinova and Minkova in eq.~\eqref{eq:KM_Thm1}.
For $n\in[k+1,2k]$ the expression is
\bq
\label{eq:KM_block_kp1_2k}
\begin{split}
  p_n = \sum_{j=1}^n \binom{n-1}{j-1}\,\frac{\lambda^j}{j!}
  \;-\; \lambda \sum_{j=0}^{n-k-1} \binom{n -k-1}{j}\,\frac{\lambda^j}{j!} \,.
\end{split}
\eq
Actually eq.~\eqref{eq:KM_block_kp1_2k} is valid also for $n=2k+1$, but we treat only the block $n\in[k+1,2k]$ here.
The lowest power of $\lambda$ in the block $n\in[k+1,2k]$ is $\lambda^2$.
The first finite difference $\Delta_1(n)$ for $n\in[k+2,2k]$ is
\bq
\begin{split}
  p_n - p_{n-1} &= \sum_{j=1}^n \binom{n-1}{j-1}\,\frac{\lambda^j}{j!} \;-\; \lambda \sum_{j=0}^{n-k-1} \binom{n-k-1}{j}\,\frac{\lambda^j}{j!}
  \\
  &\qquad -\sum_{j=1}^{n-1} \binom{n-2}{j-1}\,\frac{\lambda^j}{j!} \;+\; \lambda \sum_{j=0}^{n-k-2} \binom{n-k-2}{j}\,\frac{\lambda^j}{j!}
  \\
  &= \frac{\lambda^n}{n!} + \sum_{j=1}^{n-1} \biggl[\,\binom{n-1}{j-1} - \binom{n-2}{j-1}\,\biggr]\,\frac{\lambda^j}{j!} 
  \\
  &\qquad -\frac{\lambda^{n-k}}{(n-k-1)!} -\lambda\,\sum_{j=0}^{n-k-2} \biggl[\,\binom{n-k-1}{j} - \binom{n-k-2}{j}\,\biggr]\,\frac{\lambda^j}{j!}
  \\
  &= \frac{\lambda^n}{n!} + \sum_{j=2}^{n-1} \binom{n-2}{j-2}\,\frac{\lambda^j}{j!} 
  -\frac{\lambda^{n-k}}{(n-k-1)!} -\lambda\,\sum_{j=1}^{n-k-2} \binom{n-k-2}{j-1}\,\frac{\lambda^j}{j!}
  \\
  &= \sum_{j=2}^n \binom{n-2}{j-2}\,\frac{\lambda^j}{j!} -\lambda\,\sum_{j=1}^{n-k-1} \binom{n-k-2}{j-1}\,\frac{\lambda^j}{j!} \,.
\end{split}
\eq
Writing out the lowest powers of $\lambda$ yields
\bq
\begin{split}
  p_n -p_{n-1} &= \frac{\lambda^2}{2!} -\frac{\lambda^2}{1!} +(n-2)\frac{\lambda^3}{3!} -(n-k-2)\frac{\lambda^3}{2!} +\dots
  \\
  &= -\frac{\lambda^2}{2} +(n-2-3n+3k+6)\frac{\lambda^4}{4!} +\dots
  \\
  &= -\frac{\lambda^2}{2} +(3k+4-2n)\frac{\lambda^3}{3!} +\dots
\end{split}
\eq
For fixed $k\ge2$ and sufficiently small $\lambda>0$, the sequence $\{p_{k+1},\dots,p_{2k}\}$ is {\em strictly decreasing} in $n$.
(This is a simpler and better proof of the decreasing sequence property than that in \cite{Mane_Poisson_k_CC23_7}.)
The second difference $\Delta_2(n)$ is given by a similar induction argument to that for the block $n\in[1,k]$.
\bq
\begin{split}
  p_n -2p_{n-1} +p_{n-2} &= \sum_{j=3}^n \binom{n-3}{j-3}\,\frac{\lambda^j}{j!} -\lambda\,\sum_{j=2}^{n-k-1} \binom{n-k-3}{j-2}\,\frac{\lambda^j}{j!} \,.
\end{split}
\eq
This is valid for all $n\in[k+3,2k]$.
Writing out the lowest powers of $\lambda$ yields
\bq
\begin{split}
  p_n -2p_{n-1} +p_{n-2} &= \frac{\lambda^3}{3!} -\frac{\lambda^3}{2!} +(n-3)\frac{\lambda^4}{4!} -(n-k-3)\frac{\lambda^4}{3!} +\dots
  \\
  &= -\frac{\lambda^3}{3} +(n-3-4n+4k+12)\frac{\lambda^4}{4!} +\dots
  \\
  &= -\frac{\lambda^3}{3} +(4k+9-3n)\frac{\lambda^4}{4!} +\dots
\end{split}
\eq
For fixed $k\ge2$ and sufficiently small $\lambda>0$, the sequence $\{p_{k+1},\dots,p_{2k}\}$ is {\em concave} in $n$.
Hence for fixed $k\ge2$ and sufficiently small $\lambda>0$, if $p_{k+2} < p_{k+1}$, then $p_{n+1} < p_n$ for all $n\in[k+1,2k]$.
\begin{enumerate}
\item
  The concavity property suggests that if $p_{k+2} < p_{k+1}$, then $p_{n+1} < p_n$ for all $n\in[k+1,2k]$.
\item
  There is the additional condition $p_{k+1} < p_k$, then the pmf decreases strictly for all $n \ge k$ (the proof was given in \cite{Mane_Poisson_k_CC23_7}).
  It was shown in \cite{Mane_Poisson_k_CC23_7} that $p_{k+1} < p_k$ if $\lambda \le t_k$, but this is a sufficient but not necessary upper bound on $\lambda$.  
\end{enumerate}
It is still necessary to determine a quantitative upper bound for $\lambda$ such that $p_{n+1} < p_n$ for all $n\in[k+1,2k]$.
The evidence suggests that the condition $p_{k+1}=p_{k+2}$ yields the supremum for $\lambda$.
\begin{definition}
\label{def:def_lam_pk1k2}
Let $\lambda_{k+1,k+2}$ denote the (unique) positive real root of the equation $p_{k+2}-p_{k+1}=0$.
\end{definition}
\begin{conjecture}
\label{conj:upbound_monotone_decrease}
  For fixed $k\ge2$, the value of $\lambda_k^{\rm dec}$, the supremum for $\lambda$ such that the pmf of the Poisson distribution of order $k$
  decreases strictly for all $n\ge k$ is given by the positive real root of the equation $p_{k+2}-p_{k+1}=0$, denoted by $\lambda_{k+1,k+2}$:
\bq
\label{eq:conj_lambda_k_dec}
\lambda_k^{\rm dec} = \lambda_{k+1,k+2} \,.
\eq
Then for fixed $k\ge2$ and $\lambda\in(0,\lambda_{k+1,k+2})$, $p_{n+1} < p_n$ for all $n\ge k$.
\end{conjecture}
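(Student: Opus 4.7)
The strategy is to split Conjecture \ref{conj:upbound_monotone_decrease} into two inequalities, $\lambda_k^{\rm dec} \le \lambda_{k+1,k+2}$ and $\lambda_k^{\rm dec} \ge \lambda_{k+1,k+2}$, and treat them separately. The upper inequality is immediate: at $\lambda=\lambda_{k+1,k+2}$ the equation $p_{k+2}=p_{k+1}$ holds by definition, so strict decrease already fails at $n=k+1$; because $p_{k+2}-p_{k+1}$ is a polynomial in $\lambda$ whose sign changes at this simple positive root (as the numerical evidence and the leading $-\lambda^2/2$ term of $p_n-p_{n-1}$ computed in Sec.~\ref{sec:block_kp1_2k} suggest), we have $p_{k+2}>p_{k+1}$ just above $\lambda_{k+1,k+2}$, so the pmf is not monotonically decreasing for any $\lambda$ in a right-neighborhood. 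Hence $\lambda_k^{\rm dec} \le \lambda_{k+1,k+2}$.

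The reverse inequality is where the real work lies. For each $n\ge k$ let $\lambda_{n,n+1}$ denote the smallest positive real root of $p_{n+1}-p_n=0$. I would establish the lower bound by proving the following ordering claim: for every $k\ge 2$ and every $n\ge k$,
\bq
\label{eq:ordering_claim}
\lambda_{n,n+1} \;\ge\; \lambda_{k+1,k+2}\,.
\eq
Once \eqref{eq:ordering_claim} is known, the proof concludes quickly. For $\lambda\in(0,\lambda_{k+1,k+2})$, the bound $\lambda<\lambda_{k,k+1}$ gives $p_{k+1}<p_k$; the bounds $\lambda<\lambda_{n,n+1}$ for $n\in[k+1,2k-1]$ give a strictly decreasing block $\{p_{k+1},\dots,p_{2k}\}$ together with $p_k>p_{k+1}>\dots>p_{2k}$; and Proposition \ref{prop:inductionproof} then extends strict decrease to every $n\ge 2k$ by induction on $n$. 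So \eqref{eq:ordering_claim} delivers $\lambda_k^{\rm dec}\ge\lambda_{k+1,k+2}$.

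The ordering claim \eqref{eq:ordering_claim} is the main obstacle. My plan is to attack it using the explicit polynomial expression derived in Sec.~\ref{sec:block_kp1_2k},
\bq
p_n - p_{n-1} \;=\; \sum_{j=2}^n \binom{n-2}{j-2}\frac{\lambda^j}{j!} \;-\;\lambda \sum_{j=1}^{n-k-1}\binom{n-k-2}{j-1}\frac{\lambda^j}{j!}\,,
\eq
valid for $n\in[k+2,2k]$, together with the analogous expression for the block $n=2k+1$ and the recurrence \eqref{eq:KM_rechk}. For $n$ in the block $[k+2,2k]$ I would try to compare coefficients of $p_{n+1}-p_n$ and $p_{k+2}-p_{k+1}$ termwise: the leading $-\lambda^2/2$ is the same, but the next-order term $(3k+4-2n)\lambda^3/6$ is already more favorable (smaller in magnitude when negative, or positive) as $n$ grows, suggesting that the negative lobe of $p_{n+1}-p_n$ extends at least as far to the right as that of $p_{k+2}-p_{k+1}$. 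For $n\ge 2k$ I would combine the concavity result of Sec.~\ref{sec:block_kp1_2k} with the induction of Proposition \ref{prop:inductionproof} applied backwards, to propagate the strict decrease. The hard part is making this termwise/inductive comparison precise uniformly in $n$ and $k$, because the second sum has a shorter support than the first and the ``cross-over'' in the sign of $p_{n+1}-p_n$ is governed by a delicate balance between $\binom{n-2}{j-2}/j!$ and $\binom{n-k-2}{j-1}/(j-1)!$. An alternative line of attack would be to show directly that $\partial(p_{n+1}-p_n)/\partial n > 0$ at $\lambda=\lambda_{k+1,k+2}$ in an appropriate continuous extension, which would immediately yield \eqref{eq:ordering_claim}; however, the author's own numerical evidence suggests that this monotonicity in $n$ is nontrivial and likely requires the full combinatorial structure of eq.~\eqref{eq:KM_Thm1} rather than any soft argument.
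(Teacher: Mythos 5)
You should note first that the paper itself offers \emph{no proof} of this statement: it is presented as Conjecture \ref{conj:upbound_monotone_decrease} and supported only by numerical evidence, together with the proved partial results (strict decrease and concavity of $\{p_{k+1},\dots,p_{2k}\}$ for sufficiently small $\lambda$, Proposition \ref{prop:inductionproof}, and the necessary bound $\lambda_k^{\rm dec}<4/(k+1)$). So the question is not whether you match the paper's argument but whether your proposal settles the conjecture, and it does not. The easy half is fine: since $p_{k+2}=p_{k+1}$ at $\lambda_{k+1,k+2}$, the definition of $\lambda_k^{\rm dec}$ gives $\lambda_k^{\rm dec}\le\lambda_{k+1,k+2}$ immediately, and you can avoid leaning on ``numerical evidence'' for the sign change: by the first-difference formula of Sec.~\ref{sec:block_kp1_2k} with $n=k+2$, $(p_{k+2}-p_{k+1})/\lambda^2=\sum_{j=2}^{k+2}\binom{k}{j-2}\lambda^{j-2}/j!-1$, which equals $-\tfrac12$ at $\lambda=0$ and is strictly increasing, so the positive root exists, is unique, and the difference changes sign from negative to positive there. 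Your reduction of the hard half — the ordering claim $\lambda_{n,n+1}\ge\lambda_{k+1,k+2}$ for $n\in[k,2k-1]$, followed by Proposition \ref{prop:inductionproof} to propagate strict decrease to all $n\ge2k$ — is structurally sound.

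The genuine gap is that this ordering claim \emph{is} the conjecture: you have reformulated the hard direction, not proved it, and you say so yourself. The termwise comparison you sketch cannot close it, because the location of the unique positive root of $p_{n+1}-p_n$ is governed by the full coefficient list, not by the $\lambda^2$ and $\lambda^3$ terms; the positive coefficients coming from the first sum grow with $n$ (roughly $\binom{n-1}{j-2}/j!$) and push the root leftward, while the subtracted sum has shorter support, and no uniform-in-$(n,k)$ comparison is given. Two further points deserve emphasis. First, the case $n=k$ of your claim (i.e.\ $p_{k+1}<p_k$ on all of $(0,\lambda_{k+1,k+2})$, equivalently $\lambda_{k,k+1}\ge\lambda_{k+1,k+2}$) cannot be dispatched by the known sufficient condition $\lambda\le t_k$: for $k=2$ one has $t_2=\sqrt5-1\simeq1.236<\lambda_{3,4}=2(\sqrt7-2)\simeq1.2915$, precisely the situation the paper highlights ($p_k>2$ yet $p_{k+1}<p_k$), so even this piece is open. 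Second, your fallback of showing monotonicity in $n$ of $p_{n+1}-p_n$ at fixed $\lambda=\lambda_{k+1,k+2}$ is, if anything, stronger than what is needed and equally unproved. As it stands, the proposal is a reasonable reduction consistent with the numerics, but it leaves the conjecture exactly as open as the paper does.
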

\begin{remark}
It was shown in \cite{Mane_Poisson_k_CC23_7} that $p_{k+1} < p_k$ if $\lambda \le t_k$, but this is a sufficient but not necessary bound.
Recall $t_k$ is the value of $\lambda$ such that $p_k=2$.
Numerical evidence suggests that $\lambda < \lambda_{k+1,k+2}$ is the applicable supremum to employ in Conjecture \ref{conj:upbound_monotone_decrease}.
For example for $k=2$, $t_k$ is the positive real root of the equation $\frac12\lambda^2 +\lambda - 2 = 0$ and its value is $t_k=\sqrt{5}-1 \simeq 1.23606$.
Also $\lambda_{k+1,k+2}$ is the positive real root of the equation $\frac16\lambda^3 +\frac12\lambda^2 -\lambda = 0$
and its value is $\lambda_{k+1,k+2}=2(\sqrt{7}-2) \simeq 1.2915026$.
Fig.~\ref{fig:graph_hist_k2_pk1k2} displays a plot of the scaled pmf $h_k(n;\lambda)$
of the Poisson distribution of order $2$ and $\lambda=\lambda_{k+1,k+2}=2(\sqrt{7}-2) \simeq 1.2915026$.
Observe that $p_k>2$ but even so, $p_{k+1} < p_k$.
Observe also that $p_{k+1}=p_{k+2}$ and the pmf decreases monotonically for $n \ge k$, except for the one case $p_{k+1}=p_{k+2}$.
\end{remark}
\begin{remark}
\label{rem:upbound_k3}
For $k=3$, the condition $p_{k+1}=p_{k+2}$ is attained for $\lambda = -5 +\sqrt[3]{55-10\sqrt{29}} +\sqrt[3]{55+10\sqrt{29}} \simeq 0.82187688$.
Fig.~\ref{fig:graph_hist_k3_pk1k2} displays a plot of the scaled pmf $h_k(n;\lambda)$
of the Poisson distribution of order $3$ and $\lambda=0.82187688$.
Observe that $p_{k+1}=p_{k+2}$ and the pmf decreases monotonically for $n \ge k$, except for the one case $p_{k+1}=p_{k+2}$.  
Observe also that $p_{k+1} < p_k$ but $p_k<2$.
This goes to show that the condition $p_k<2$, i.e.~$\lambda \le t_k$, is not relevant to determine a supremum value for $\lambda$
for the pmf to decrease monotonically for all $n \ge k$.
\end{remark}
\begin{remark}
Numerical calculations indicate eq.~\eqref{eq:conj_lambda_k_dec} yields the supremum for $\lambda$ for the pmf of the Poisson distribution of order $k$
to decrease strictly for all $n\ge k$.  
It remains to determine a precise expression for the value of $\lambda_{k+1,k+2}$.
By definition, it is the positive real root of a polynomial equation, but that is a polynomial equation of degree $k$.
\end{remark}
\noindent
We can obtain a gross {\em overestimate} for $\lambda_k^{\rm dec}$ as follows.
For the pmf to be strictly decreasing for all $n\ge k$, we must have $p_n<p_{n-1}$ for all $n \ge k+1$.
Process eq.~\eqref{eq:KM_rechk} using $p_n < p_j$ for $j < n$:
\bq
\label{eq:gross_upbound_monotone}
\begin{split}
  np_n &= \lambda\,(p_{n-1} +2p_{n-2} +\dots +kp_{n-k})
  \\
  &> \lambda p_n \,(1+\dots +k)
  \\
  &=\lambda\frac{k(k+1)}{2}\, p_n
  \\
  &=\kappa\lambda\, p_n \,.
\end{split}  
\eq
Cancel $p_n$ to deduce $\kappa\lambda < n$. We require $n-k\ge k$ to justify eq.~\eqref{eq:gross_upbound_monotone},
so the smallest applicable value of $n$ is $2k$.
Hence
\bq
  \lambda < \frac{2k}{\kappa} = \frac{4}{k+1} \,.
\eq
For $k=3$, the right-hand size equals $1$.
We saw in Remark \ref{rem:upbound_k3} above that the correct upper bound is lower than this.
See Fig.~\ref{fig:graph_hist_k3_pk1k2}.
\begin{proposition}
  For fixed $k\ge2$, an upper bound for $\lambda_k^{\rm dec}$, for the pmf of the Poisson distribution of order $k$ to decrease strictly for all $n\ge k$, is given by
\bq
\label{eq:lambda_dec_upbound}  
  \lambda_k^{\rm dec} < \frac{4}{k+1} \,.
\eq
This is a necessary but not sufficient upper bound.  
This upper bound shows that the value of $\lambda_k^{\rm dec}$ must decrease at least as $O(1/k)$ as $k$ increases.
\end{proposition}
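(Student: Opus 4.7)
The plan is to formalize the sketch already laid out in eq.~\eqref{eq:gross_upbound_monotone}, which essentially contains the whole argument. The idea is to take the hypothesis ``strictly decreasing for all $n \ge k$'' and show it is incompatible with $\lambda \ge 4/(k+1)$, via a pigeonhole-style substitution into the recurrence \eqref{eq:KM_rechk}.

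First, I would fix $\lambda>0$ and suppose, toward proving the necessary bound, that $p_n < p_{n-1}$ holds for every $n \ge k+1$. I then need to find an index $n$ for which the recurrence \eqref{eq:KM_rechk} involves only values of $p$ that are controlled by the decreasing hypothesis. Since the recurrence reads $np_n = \lambda\sum_{j=1}^k jp_{n-j}$, I need all of $p_{n-1},\ldots,p_{n-k}$ to be strictly greater than $p_n$; by the decreasing hypothesis this holds as soon as $n-k \ge k$, i.e. $n \ge 2k$.

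Second, for such $n$, I would substitute the strict bounds $p_{n-j} > p_n$ into the recurrence:
\begin{equation}
np_n \;=\; \lambda\sum_{j=1}^k jp_{n-j} \;>\; \lambda p_n\sum_{j=1}^k j \;=\; \kappa\lambda\, p_n.
\end{equation}
Because $p_n$ is a polynomial in $\lambda$ with nonnegative coefficients and positive leading term (and $\lambda>0$), we have $p_n > 0$ and may cancel it to obtain $n > \kappa\lambda$ for all $n \ge 2k$. Taking the sharpest case $n=2k$ gives $\kappa\lambda < 2k$, and since $\kappa = k(k+1)/2$ this is exactly $\lambda < 4/(k+1)$, establishing \eqref{eq:lambda_dec_upbound}. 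The statement that the bound is necessary but not sufficient follows from noting that we used only the crudest consequence $p_{n-j} > p_n$ of the decreasing hypothesis and discarded all quantitative information about how fast the sequence decreases.

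There is no real obstacle here; the only matters requiring a sentence of justification are (i) the positivity $p_n > 0$ used to cancel, and (ii) the verification that $n = 2k$ is indeed the smallest index at which all $k$ predecessors in the recurrence are covered by the decreasing hypothesis. The final assertion that $\lambda_k^{\rm dec} = O(1/k)$ is then immediate from $4/(k+1)$.
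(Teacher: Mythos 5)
Your proposal is correct and is essentially the paper's own argument: it formalizes the substitution $p_{n-j}>p_n$ into the recurrence \eqref{eq:KM_rechk} at $n=2k$, cancels $p_n>0$, and obtains $\kappa\lambda<2k$, i.e.\ $\lambda<4/(k+1)$. No meaningful difference from the paper's proof.
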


\newpage
\setcounter{equation}{0}
\section{\label{sec:numest_lam_pk1k2}Numerical estimate of supremum for monotonic decrease of the pmf for $n\ge k$}
The value of $\lambda_{k+1,k+2}$ was computed numerically for $2 \le k \le 40000$.
The inverse value $\lambda_{k+1,k+2}^{-1}$ is fitted remarkably well by a straight line.
Fig.~\ref{fig:graph_invroot_pk1k2} displays a plot of $\lambda_{k+1,k+2}^{-1}$ (dotted line) for $2 \le k \le 40000$.
The dashed line is the straight line fit $\alpha k+\beta = 0.442972564\,k -0.113086$ and is visually indistinguishable.
The coefficients of the straight line were obtained via a series of regression fits and are given as follows
\begin{subequations}
\label{eq:fit_param_lam_pk1k2}
\begin{align}
  \alpha &= \frac49 - 0.0015 +3\cdot10^{-5} -2\cdot10^{-6} +1\cdot10^{-7} +2\cdot10^{-8} -9\cdot10^{-10} &=& \phantom{-}0.442972564 \,,
  \\
  \beta &= -0.1131 +2\cdot10^{-5} -6\cdot10^{-6} &=& -0.113086 \,.
\end{align}  
\end{subequations}
Fig.~\ref{fig:graph_invroot_diff_pk1k2} displays a plot of the difference $\alpha k +\beta - \lambda_{k+1,k+2}^{-1}$ for $1000 \le k \le 40000$.
Note the following.
\begin{enumerate}
\item
  The difference increases as the value of $k$ increases, which is to be expected from a numerical calculation.
\item
The maximum difference is $0.000282326$ and the minimum difference is $-0.000278234$, at the right-hand edge $k=40000$.
From Fig.~\ref{fig:graph_invroot_pk1k2}, $\lambda_{k+1,k+2}^{-1} \simeq 17718$ for $k=40000$, which yields a relative accuracy of $1.58\cdot10^{-8}$.
\item
The worst relative accuracy of the fit is $0.003$ at $k=3$.
\item
  Significantly, the difference is {\em symmetric} around zero, i.e.~the fit is {\em unbiased}: it is neither systematically too high nor too low.
\end{enumerate}
  One can conjecture from the values of $\alpha$ and $\beta$ in eq.~\eqref{eq:fit_param_lam_pk1k2}
  that the exact values are really $\alpha=\frac49$ and $\beta=-\frac19$, and the residuals are due to numerical precision errors.
Hence asymptotically $\lambda_{k+1,k+2} \simeq 9/(4k-1)$.
The numerical evidence suggests this is not quite correct, although it is a good approximation.
Table \ref{tb:lam_pk1k2} tabulates the values of $\lambda_{k+1,k+2}$ and $9/(4k-1)$ and the difference $\lambda_{k+1,k+2} - 9/(4k-1)$
for $k=2,\dots,10$ and higher values.  
Observe that $\lambda_{k+1,k+2} > 9/(4k-1)$ in all cases and the difference decreases as the value of $k$ increases.
Numerical calculations indicate that $\lambda_{k+1,k+2} > 9/(4k-1)$ for all $k\in[2,40000]$.
  Numerical calculations moreover indicate that for all $k\in[2,40000]$, the pmf decreases monotonically for $0 < \lambda < 9/(4k-1)$.
\begin{conjecture}
\label{conj:suff_not_nec_upbound_monotone_decrease}
For fixed $k\ge2$, the pmf of the Poisson distribution of order $k$ decreases monotonically for all $n\ge k$ for
\bq
\label{eq:suff_not_nec_upbound_monotone_decrease}
0 < \lambda < \frac{9}{4k-1} \,.
\eq
The value $9/(4k-1)$ is a sufficient, but not necessary, upper bound on the value of $\lambda$.
It is a good approximation to the true supremum $\lambda_k^{\rm dec}$ for $k\gg1$.
\end{conjecture}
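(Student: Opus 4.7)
The plan is to reduce the conjecture to a pointwise verification on the initial block $\{p_k, p_{k+1},\ldots, p_{2k}\}$ at $\lambda^{\star} = 9/(4k-1)$, and then invoke Proposition~\ref{prop:inductionproof} to propagate monotonic decrease to all $n\ge k$. Concretely, if this block is strictly decreasing for every $\lambda\in(0,\lambda^{\star})$, then the sliding-window induction of Proposition~\ref{prop:inductionproof}, applied successively to $\{p_{n-k},\ldots,p_n\}$ for $n=2k,2k+1,\ldots$, extends the strict decrease to every $n\ge k$. In particular, I will \emph{not} assume Conjecture~\ref{conj:upbound_monotone_decrease}: the goal is only the weaker sufficiency statement.

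To handle the initial block I would prove three sub-claims on $(0,\lambda^{\star}]$: (a) $p_k>p_{k+1}$; (b) $p_{k+1}>p_{k+2}$; (c) the second finite difference $\Delta_2(n)=p_n-2p_{n-1}+p_{n-2}<0$ for every $n\in[k+3,2k]$. Claims (b) and (c) together propagate the strict decrease through the sub-block $[k+1,2k]$: if $\Delta_2(n+1)<0$ and $p_n-p_{n-1}<0$, then $p_{n+1}-p_n<p_n-p_{n-1}<0$, so a concave sequence that decreases once keeps decreasing. Appending (a) then yields the required strictly decreasing chain of $k+1$ consecutive terms.

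Each sub-claim is a polynomial-sign assertion that can be made explicit from eq.~\eqref{eq:KM_block_kp1_2k}. Telescoping that identity at $n=k+1$ and $n=k+2$ gives
\begin{equation*}
p_{k+1}-p_{k+2} \;=\; \tfrac{1}{2}\lambda^{2} \;-\; \lambda^{2}\sum_{i=1}^{k}\binom{k}{i}\frac{\lambda^{i}}{(i+2)!},
\end{equation*}
so sub-claim~(b) reduces to showing that $\sum_{i=1}^{k}\binom{k}{i}\lambda^{i}/(i+2)!<1/2$ at $\lambda=\lambda^{\star}$. Writing $y=k\lambda=9k/(4k-1)$ (which decreases monotonically from $18/7$ at $k=2$ to $9/4$ as $k\to\infty$) and using $\binom{k}{i}\lambda^{i} = (y^{i}/i!)\prod_{j=0}^{i-1}(1-j/k)$, the target sum becomes $S(k):=\sum_{i\ge 1} y^{i}\prod_{j=0}^{i-1}(1-j/k)/(i!(i+2)!)$. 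In the limit $k\to\infty$ the product tends to $1$ and $S(k)\to I_{2}(3)/(9/4)-\tfrac{1}{2}\approx 0.498$, where $I_{2}$ is the modified Bessel function of the first kind. Analogous manipulations recast (a) and (c) into similar series inequalities; (c) carries additional $n$-dependence through coefficients $\binom{n-3}{j-3}$ and $\binom{n-k-3}{j-2}$ and must be verified uniformly in $n\in[k+3,2k]$.

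The main obstacle is the tightness of the bound. The numerical evidence in Sec.~\ref{sec:numest_lam_pk1k2} suggests $9/(4k-1)$ differs from the conjectured true supremum $\lambda_{k+1,k+2}$ only at order $O(1/k^{2})$, so the asymptotic margin in sub-claim~(b) is only $\approx 0.002$. In particular, dropping the finite-product correction and using only $\binom{k}{i}\le k^{i}/i!$ yields an upper bound that already overshoots $1/2$ for small $k$ (at $y=18/7$ the naive Bessel bound is about $0.59$), so the factors $\prod_{j=0}^{i-1}(1-j/k)$ cannot be discarded. I would therefore split the problem into two regimes: for $k\le K_{0}$ (some modest constant) verify (a)--(c) by direct polynomial computation; for $k>K_{0}$ perform a careful expansion in $1/k$ around the Bessel limit, showing that $S(k)$ is non-increasing in $k$ (or at least bounded by $S(\infty)+o(1)$) with an explicit error bound. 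Sub-claim~(c) is the most delicate step, since the second-difference polynomial varies with $n$ and the inequality must hold uniformly across $[k+3,2k]$, where finite-size corrections at the upper end become comparable to the leading negative term; this is where I expect the genuine technical work to lie.
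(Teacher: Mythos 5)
First, note that the statement you are proving is labeled a \emph{conjecture} in the paper: the author offers no proof at all, only numerical evidence (the root computations for $2\le k\le 40000$ in Table \ref{tb:lam_pk1k2} and the observation that $9.05/(4k-1)$ already fails). So there is no paper proof to compare against, and a complete argument along your lines would actually go beyond the paper. Your skeleton is sound and consistent with the paper's machinery: reducing to strict decrease of the block $\{p_k,\dots,p_{2k}\}$ on $(0,9/(4k-1))$ and then propagating via Proposition \ref{prop:inductionproof} is exactly the right reduction, your telescoped identity $p_{k+1}-p_{k+2}=\tfrac12\lambda^2-\lambda^2\sum_{i=1}^k\binom{k}{i}\lambda^i/(i+2)!$ checks out against eq.~\eqref{eq:KM_block_kp1_2k}, and your Bessel-limit value $I_2(3)/(9/4)-\tfrac12\approx 0.498$ is numerically correct, which explains why the constant $9$ is so tight.

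However, as written the proposal has genuine gaps and is a plan rather than a proof. (i) Sub-claim (b) uniformly in $k$ is the entire content of the conjecture for the first pair, and the margin you identify ($\approx 0.002$ in the limit, with the finite-$k$ corrections $y=9k/(4k-1)>9/4$ and $\prod_{j<i}(1-j/k)<1$ pulling in opposite directions) means the ``careful expansion in $1/k$ with explicit error bound'' and the small-$k$ casework are the whole difficulty; nothing in the paper or in your sketch establishes that $S(k)<1/2$ at $\lambda^\star$, nor even that $S(k)$ is monotone in $k$. (ii) Sub-claim (c), concavity on all of $[k+3,2k]$ up to $\lambda^\star$, is only known from the paper for ``sufficiently small'' $\lambda$; the leading coefficient $(4k+9-3n)$ is negative near $n=2k$ for $k\ge5$ but the sign of the full polynomial at $\lambda=9/(4k-1)$ is not controlled, and it is not even clear (from the paper's numerics) that concavity persists there — if it fails you would need a fallback such as bounding each first difference directly. (iii) Sub-claim (a) cannot be inherited from the known sufficient bound $\lambda\le t_k$ in all cases: for $k=2$ one has $t_2=\sqrt5-1\approx1.236<9/7\approx1.286=\lambda^\star$, so $p_k>p_{k+1}$ on $(0,\lambda^\star]$ needs its own argument (direct computation for small $k$, and a separate estimate for general $k$). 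Until (a)--(c) are actually proved with quantitative error control, the conjecture remains open; your contribution so far is a correct reduction and a correct identification of where the difficulty lies, not a proof.
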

\begin{remark}
  Increasing the value of $\lambda$ to $9.05/(4k-1)$ results in a value which is too large.
  For fixed $k\ge2$ and $\lambda=9.05/(4k-1)$, the pmf of the Poisson distribution of order $k$ is {\em not} decreasing for all $n\ge k$
  (for all tested values $k\ge2$).
  This suggests that $9/(4k-1)$ is a good approximation (sufficient but not necessary) for the true supremum $\lambda_k^{\rm dec}$.
\end{remark}

\newpage
\section{\label{sec:conc}Conclusion}
This note focused on the properties of two blocks of elements of the probability mass function (pmf) of the Poisson distribution of order $k\ge2$.
The first block was the elements $p_n$ for $n\in[1,k]$ and the second block was the elements $p_n$ for $n\in[k+1,2k]$.
The first major goal of this note was to prove that the elements $p_n$ for $n\in[1,k]$ form an ``absolutely monotonic sequence''
by which is meant that all the finite differences of the sequence are positive.
The second major goal was to analyze the properties of the elements $p_n$ for $n\in[k+1,2k]$.
It was shown that for sufficiently small $\lambda>0$, the sequence is strictly decreasing and also concave.
The purpose of the analysis was to help determine a supremum value for $\lambda$,
such that the pmf of the Poisson distribution of order $k\ge2$ decreases strictly for all $n \ge k$.
A conjectured criterion for the supremum was given in Conjecture \ref{conj:upbound_monotone_decrease}.
Numerical calculations indicate it is the optimal bound, i.e.~the supremum.
In addition, a simple expression was proposed, based on numerical calculation,
which is sufficient (but not necessary) and is a good approximation for the supremum.


\newpage

\newpage
\begin{table}[htb]
\centering
\begin{tabular}[width=0.75\textwidth]{|r|l|l|l|}
  \hline
  $k$ & $\lambda_{k+1,k+2}$ & $9/(4k-1)$ & $\lambda_{k+1,k+2} - 9/(4k-1)$ \\  \hline
2	&	1.291502622	&	1.285714286	&	0.005788336	\\ \hline
3	&	0.821876885	&	0.818181818	&	0.003695066	\\ \hline
4	&	0.602607787	&	0.6	&	0.002607787	\\ \hline
5	&	0.475672588	&	0.473684211	&	0.001988378	\\ \hline
6	&	0.392901337	&	0.391304348	&	0.001596989	\\ \hline
7	&	0.334663355	&	0.333333333	&	0.001330022	\\ \hline
8	&	0.29145995	&	0.290322581	&	0.001137369	\\ \hline
9	&	0.258135147	&	0.257142857	&	0.00099229	\\ \hline
10	&	0.231648581	&	0.230769231	&	0.00087935	\\ \hline
100	&	0.022632529	&	0.022556391	&	$7.61377\cdot10^{-5}$	\\ \hline
1000	&	0.002258053	&	0.002250563	&	$7.48997\cdot10^{-6}$	\\ \hline
10000	&	0.000225753	&	0.000225006	&	$7.47754\cdot10^{-7}$	\\ \hline
20000	&	0.000112875	&	0.000112501	&	$3.73843\cdot10^{-7}$	\\ \hline
30000	&	$7.52498\cdot10^{-5}$	&	$7.50006\cdot10^{-5}$	&	$2.49221\cdot10^{-7}$	\\ \hline
40000	&	$5.64373\cdot10^{-5}$	&	$5.62504\cdot10^{-5}$	&	$1.86913\cdot10^{-7}$	\\ \hline
\end{tabular}
\caption{\label{tb:lam_pk1k2}
  Tabulation of $\lambda_{k+1,k+2}$ and the approximation $9/(4k-1)$ and the difference, for $k=2,\dots,10$ and selected higher values.}
\end{table}

\newpage
\begin{figure}[!htb]
\centering
\includegraphics[width=0.75\textwidth]{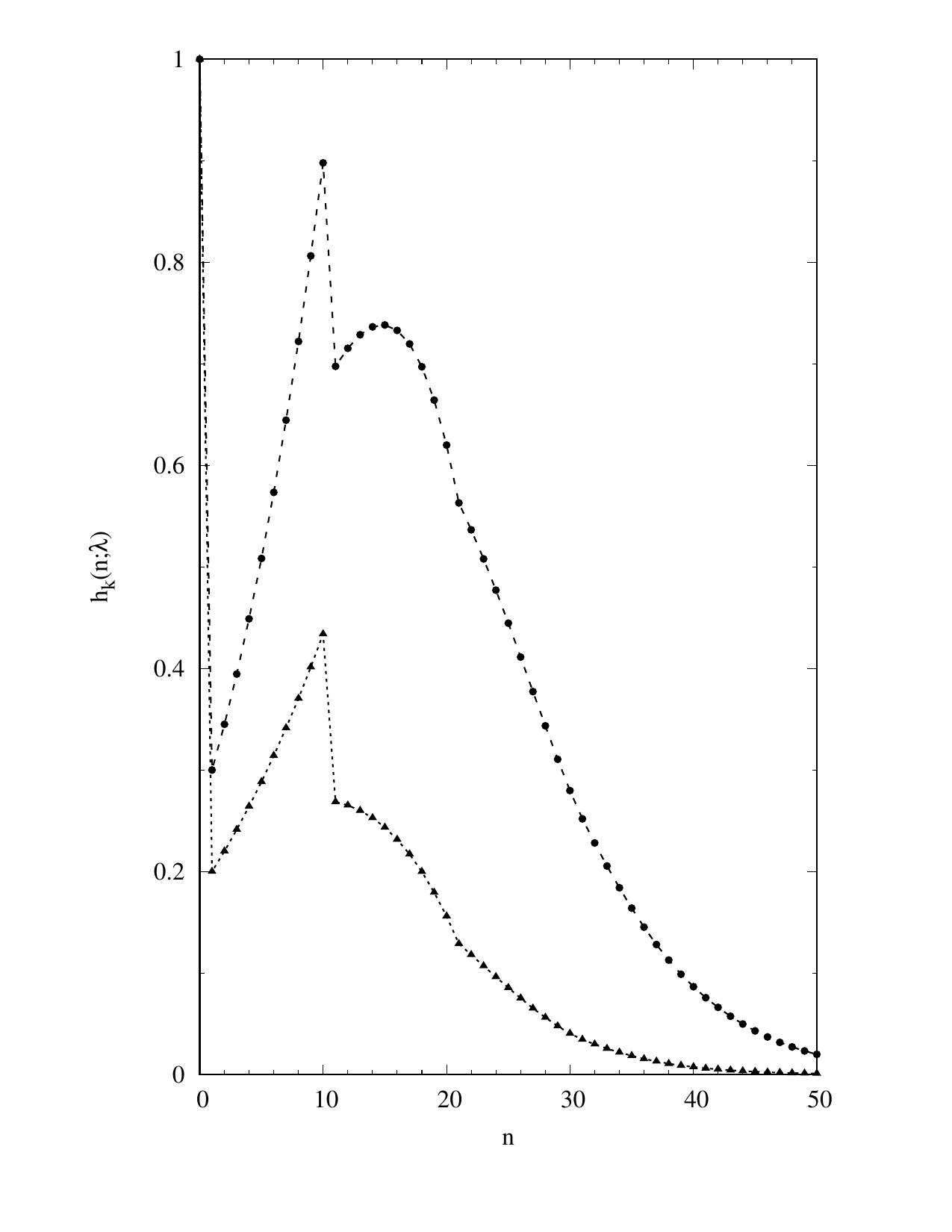}
\caption{\small
\label{fig:graph_hist_k10_nonmonotonic}
Plot of the scaled pmf $h_k(n;\lambda)$ of the Poisson distribution of order $10$ and $\lambda=0.3$ (circles) and $\lambda=0.2$ (triangles).}
\end{figure}

\newpage
\begin{figure}[!htb]
\centering
\includegraphics[width=0.75\textwidth]{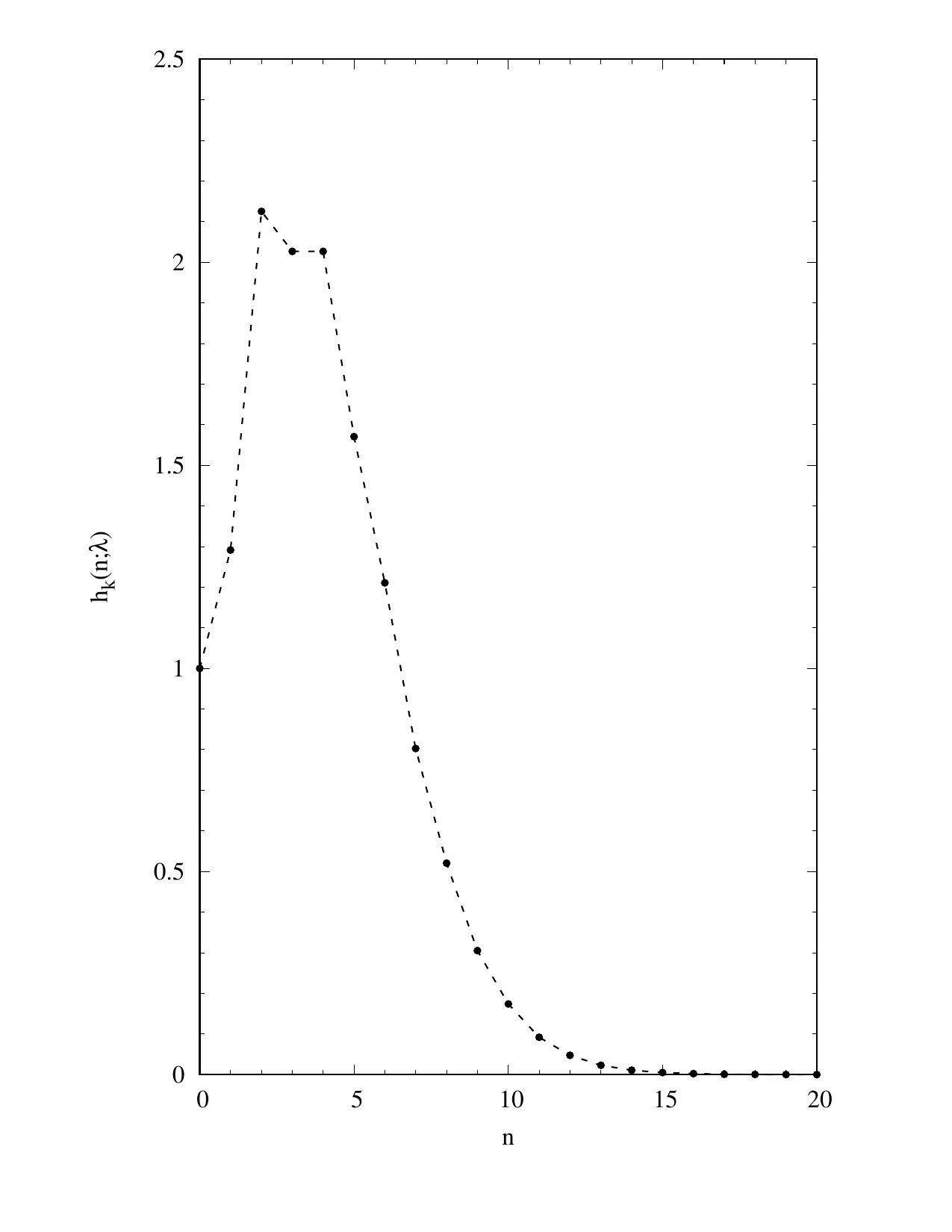}
\caption{\small
\label{fig:graph_hist_k2_pk1k2}
Plot of the scaled pmf $h_k(n;\lambda)$ of the Poisson distribution of order $2$ and $\lambda=1.2915026$.}
\end{figure}

\newpage
\begin{figure}[!htb]
\centering
\includegraphics[width=0.75\textwidth]{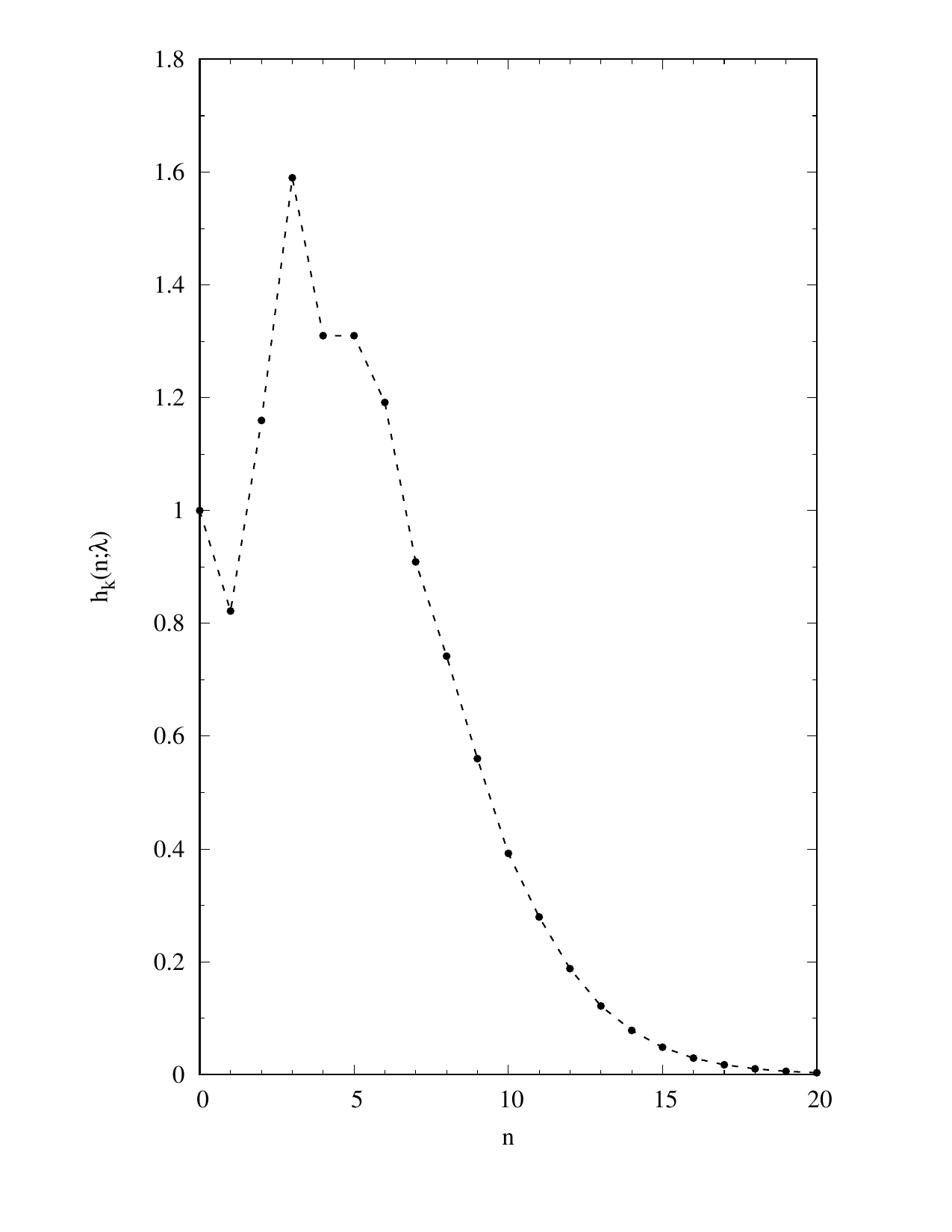}
\caption{\small
\label{fig:graph_hist_k3_pk1k2}
Plot of the scaled pmf $h_k(n;\lambda)$ of the Poisson distribution of order $3$ and $\lambda=0.82187688$.}
\end{figure}

\newpage
\begin{figure}[!htb]
\centering
\includegraphics[width=0.75\textwidth]{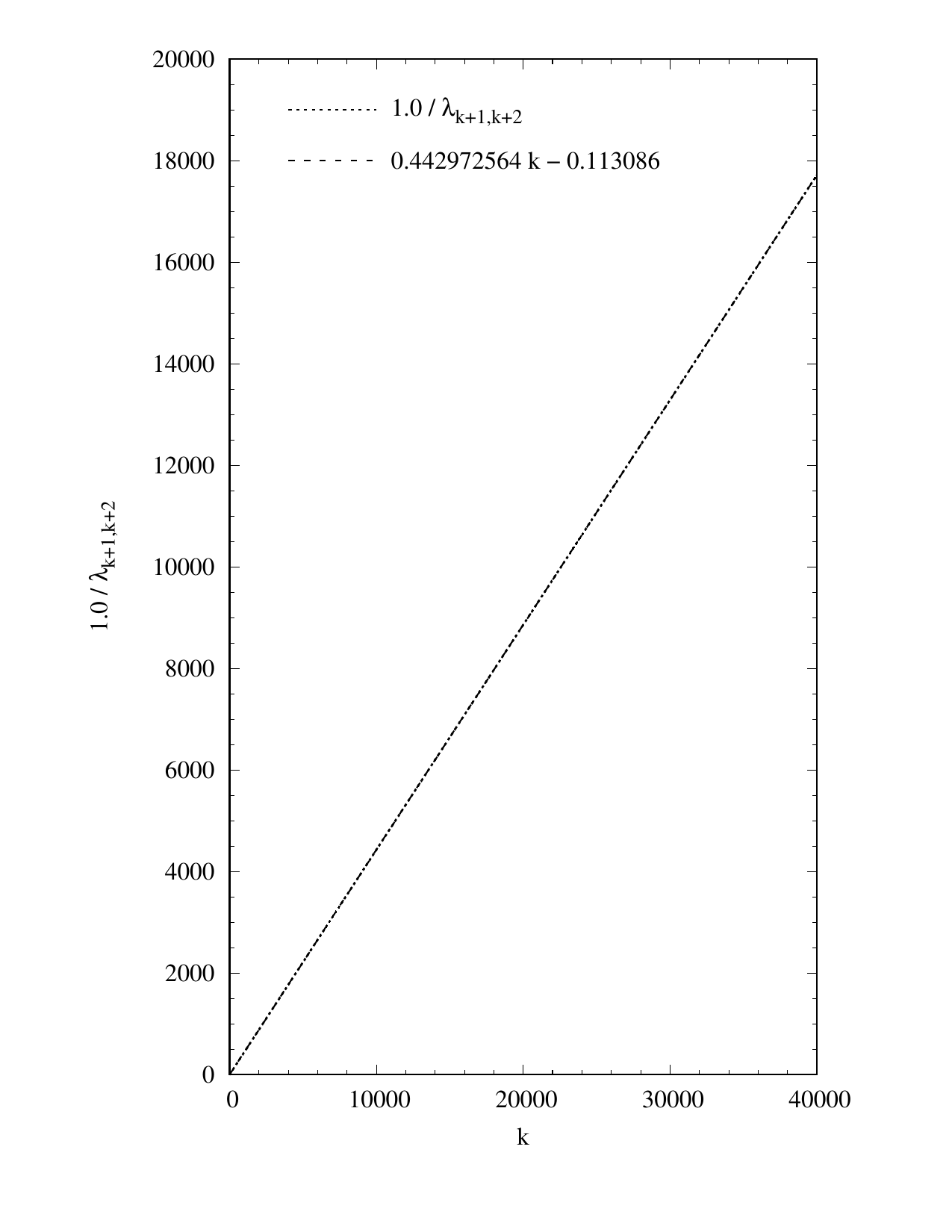}
\caption{\small
\label{fig:graph_invroot_pk1k2}
Plot of the inverse root $\lambda_{k+1,k+2}^{-1}$ (dotted line) and the fit function $0.442972564\,k -0.113086$ (dashed).}
\end{figure}

\newpage
\begin{figure}[!htb]
\centering
\includegraphics[width=0.75\textwidth]{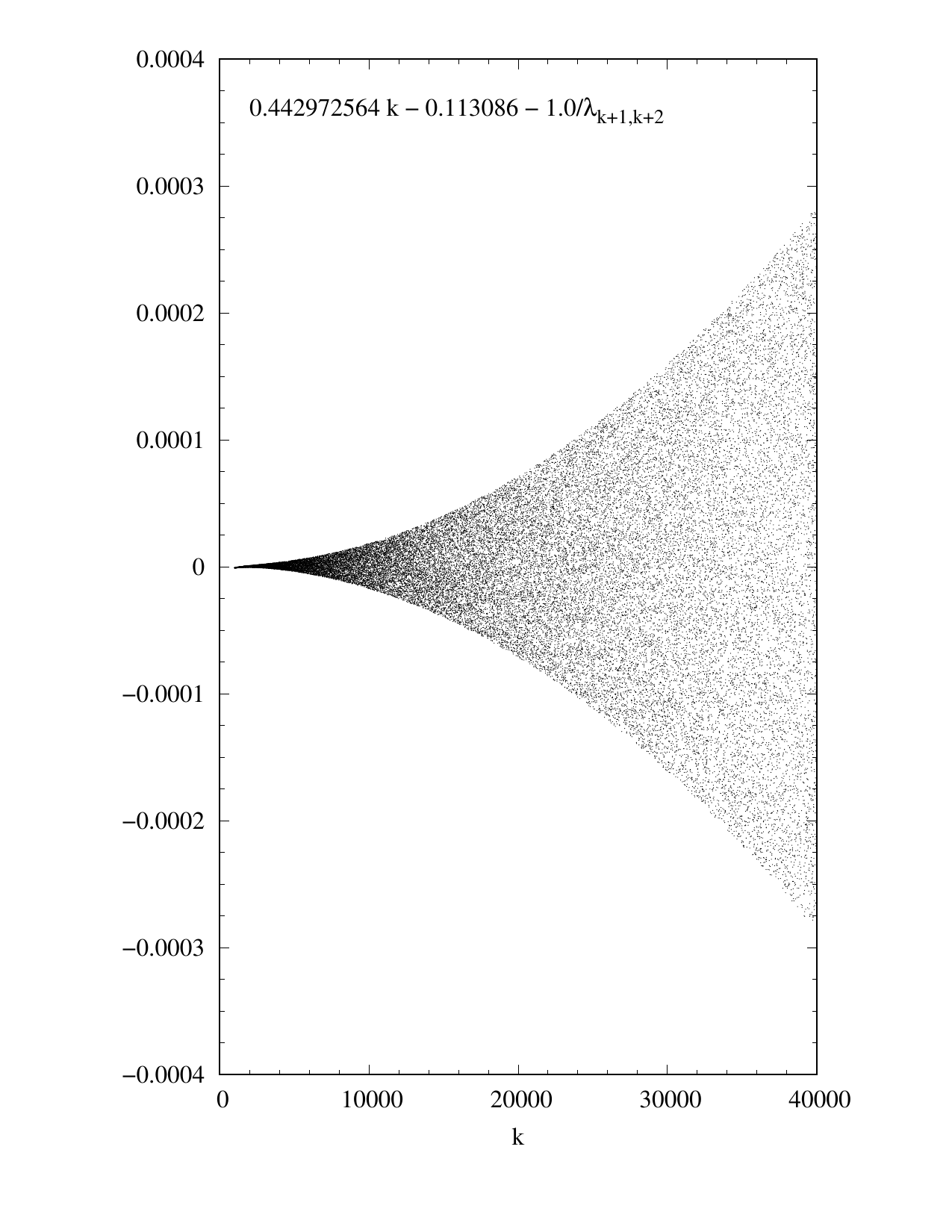}
\caption{\small
\label{fig:graph_invroot_diff_pk1k2}
Plot of the difference $0.442972564\,k -0.113086 - \lambda_{k+1,k+2}^{-1}$ for $1000 \le k \le 40000$.}
\end{figure}

\end{document}